\documentclass[11pt]{amsart}
\usepackage{amssymb,amsmath}

\makeatletter
\theoremstyle{plain}
 \newtheorem{thm}{Theorem}[section]
\newtheorem{thm*}{Theorem}
 \newtheorem{lem}[thm]{Lemma}
 \newtheorem{prop}[thm]{Proposition}
 
 \numberwithin{equation}{section}
\numberwithin{figure}{section} 
 \theoremstyle{plain}
 \theoremstyle{definition}
 \newtheorem{defn}[thm]{Definition}

\newcommand{\calM}{{{\mathcal M}}}

\newcommand{\calN}{{{\mathcal N}}}

\newcommand{\C}{{{\mathbb C}}}
\newcommand{\R}{{{\mathbb R}}}

\newcommand{\bL}{{{\bf L}}}
\newcommand{\bX}{{{\bf X}}}
\newcommand{\bY}{{{\bf Y}}}
\newcommand{\bZ}{{{\bf Z}}}
\newcommand{\bU}{{{\bf U}}}
\newcommand{\bV}{{{\bf V}}}
\newcommand{\bW}{{{\bf W}}}
\newcommand{\bm}{{{\bf m}}}
\newcommand{\bn}{{{\bf n}}}

\newcommand{\bz}{{{\bf z}}}
\newcommand{\bw}{{{\bf w}}}

\newcommand{\fL}{{{\mathfrak L}}}

\makeatother

\date{\today\\
2010 \emph{Mathematics Subject Classifications.} 32Q55, 53B99.\\
\emph{Key words.} Cross--ratios, CR structures, paired ${\rm CR}$ structures.}
\begin{document}

\title[${\rm SPCR}$ automorphisms] {Strictly paired ${\rm CR}$ linear automorphisms of $\R^4$}

\author[I.D. Platis]{Ioannis D. Platis}

\begin{abstract}
This paper investigates the algebraic and differential geometric properties of smooth mappings between domains in $\mathbb{C}^2$, classifying them according to the constant ranks of their holomorphic and antiholomorphic derivative components. Particular emphasis is placed on strictly paired CR (SPCR) linear automorphisms of $\mathbb{R}^4$, where both block matrices $A$ and $B$ have rank 1. We characterise the set of SPCR linear automorphisms as a 12-dimensional submanifold of $\mathrm{GL}(2,\mathbb{C})^2$ and analyse its underlying CR structure, demonstrating its geometric and structural rigidity.
\end{abstract}

\address{Department of Mathematics, University of Patras, Patras, Greece}
\email{idplatis@upatras.gr}

\maketitle

\section{Introduction}

The study of smooth mappings between domains in $\C^2$ naturally leads to the classification of their differentials based on the behavior of their holomorphic and antiholomorphic components (see, for instance, \cite{Webb} for related mapping problems). In this paper, we classify these diffeomorphisms and closely investigate the properties of strictly paired CR (SPCR) linear automorphisms.

Let $D_1,D_2$ be open subsets of $\C^2$ and let $F:D_1\to D_2$ be a diffeomorphism given by
$$
F(z_1,z_2)=(w_1(z_1,z_2),w_2(z_1,z_2)),\quad (z_1,z_2)\in D_1.
$$
Let an arbitrary $p\in D_1$ with $F(p)=q$; the derivative $F_{*,p}:T_{p}(D_1)\to T_{q}(D_2)$ has matrix
\begin{equation*}
DF(p)=\left(\begin{matrix}
A&B\\
\overline{B}&\overline{A}
\end{matrix}\right)_{p},
\end{equation*}
where
$$
A=\left(\begin{matrix}
\frac{\partial w_1}{\partial z_1}&\frac{\partial w_1}{\partial z_2}\\
\\
\frac{\partial w_2}{\partial z_1}&\frac{\partial w_2}{\partial z_2}
\end{matrix}\right),\quad B=\left(\begin{matrix}
\frac{\partial w_1}{\partial \overline{z_1}}&\frac{\partial w_1}{\partial \overline{z_2}}\\
\\
\frac{\partial w_2}{\partial \overline{z_1}}&\frac{\partial w_2}{\partial \overline{z_2}}
\end{matrix}\right).
$$
Since $F$ is a diffeomorphism, ${\rm rank}(DF(p))=4$ at each $p\in D_1$. This allows only the following possibilities for the (constant) ranks of the matrices $A$ and $B$:

\begin{enumerate}
\item ${\rm rank}(A)=2$ and ${\rm rank}(B)=0$. Then $F$ is holomorphic by Hartogs' Theorem and by the Inverse Function Theorem $F$ is also a biholomorphism. The isomorphism of the holomorphic tangent spaces $T^{(1,0)}_p(D_1)$ and $T^{(1,0)}_q(D_2)$ is given by $F_{*,p}$; if $Z=\sum_{i=1}^2a_i(\partial/\partial z_i)_p$ is a $(1,0)$-vector then
$$
F_{*,p}(Z)=\sum_{i=1}^2\sum_{j=1}^2a_i\frac{\partial w_j}{\partial z_i}(p)\left(\frac{\partial}{\partial w_j}\right)_q.
$$
Note that by conjugation we also have that $F_{*,p}$ identifies $T^{(0,1)}_p(D_1)$ and $T^{(0,1)}_q(D_2)$ as well. As for the Jacobian determinant of $DF(p)$ we have
$$
\det(DF(p))=\left|\begin{matrix}
A&0\\
0&\overline{A}
\end{matrix}\right|_p=|\det(A)_p|^2>0.
$$
Geometrically speaking, for all $p\in D_1$ the derivative $F_{*,p}$ maps complex lines in $T_p(D_1)$ (i.e., 2-dimensional planes spanned by vectors $X_p$ and $JX_p$, where $J$ is the natural complex structure) onto a complex line in $T_q(D_2)$; the orientation of complex lines is preserved and $F$ is orientation-preserving. 

\item ${\rm rank}(A)=0$ and ${\rm rank}(B)=2$. This is the counterpart of the previous case: Here, $F$ is an anti-biholomorphism; $F_{*,p}$ identifies $T^{(1,0)}_p(D_1)$ to $T^{(0,1)}_q(D_2)$. Explicitly, if $Z=\sum_{i=1}^2a_i(\partial/\partial z_i)_p$ is a $(1,0)$-vector then
$$
F_{*,p}(Z)=\sum_{i=1}^2\sum_{j=1}^2a_i\frac{\partial \overline{w_j}}{\partial z_i}(p)\left(\frac{\partial}{\partial \overline{w_j}}\right)_q.
$$ 
Again by conjugation, $F_{*,p}$ also identifies $T^{(0,1)}_p(D_1)$ and $T^{(1,0)}_q(D_2)$. Note though that in contrast with what happens in the one-dimensional case, here the antiholomorphic $F$ is {\it orientation-preserving}:
$$
\det(DF(p))=\left|\begin{matrix}
0&B\\
\overline{B}&0
\end{matrix}\right|_p=\left|\begin{matrix}
B&0\\
0&\overline{B}
\end{matrix}\right|_p=|\det(B)_p|^2>0.
$$ 
Here, for all $p\in D_1$ the derivative $F_{*,p}$ maps complex lines in $T_p(D_1)$ onto a complex line in $T_q(D_2)$; the orientation of complex lines is reversed whereas $F$ is again orientation-preserving.

\item ${\rm rank}(A)={\rm rank}(B)=2$. In this case, $F$ is totally real: If $Z=\sum_{i=1}^2a_i(\partial/\partial z_i)_p$ is a $(1,0)$-vector then 
$$
F_{*,p}(Z)=\sum_{i=1}^2\sum_{j=1}^2a_i\frac{\partial w_j}{\partial z_i}(p)\left(\frac{\partial}{\partial w_j}\right)_q+\sum_{i=1}^2\sum_{j=1}^2a_i\frac{\partial \overline{w_j}}{\partial z_i}(p)\left(\frac{\partial}{\partial \overline{w_j}}\right)_q=V_q+V'_q\in T^{(1,0)}_q(D_2)+T^{(0,1)}_q(D_2)
$$
and we can never have $V_q=0$ or $V'_q=0$. That is equivalent to saying that for all $p\in D_1$ the derivative $F_{*,p}$ cannot map a complex line in $T_p(D_1)$ onto a complex line in $T_q(D_2)$. Here $F$ may or may not be orientation-preserving; see next section.

\item ${\rm rank}(A)=2$ and ${\rm rank}(B)=1$. Then $F$ is a $CR$ map, meaning that the differential $F_{*,p}$ preserves the natural Cauchy-Riemann structure defined on a real hypersurface \cite{Boggess, DragomirTomassini}.

\item ${\rm rank}(A)=1$ and ${\rm rank}(B)=2$. Then $F$ is an anti-$CR$ map, meaning it reverses the complex structure on the maximally complex distribution of a real hypersurface.

\item ${\rm rank}(A)={\rm rank}(B)=1$. Then $F$ is a {\it strictly paired $CR$ (SPCR)} map, meaning the derivative $F_{*,p}$ maps a specific pair of transversal complex lines to a similar pair, introducing a tightly coupled structural constraint.
\end{enumerate}
{\it Acknowkedgements.} The author was funded by the Medicus Programme No 83765.
 
\section{${\rm SPCR}$ Linear Automorphisms of $\C^2$}

In this section, we establish the algebraic framework for linear automorphisms of $\R^4$ by identifying them with a specific subgroup of block matrices in ${\rm GL}(4,\C)$. Building upon this identification, we formally introduce Strictly Paired CR (SPCR) linear automorphisms, characterizing them through exact determinant constraints on their complex matrix components. Finally, we demonstrate that the set of SPCR automorphisms forms a rigid 12-dimensional CR submanifold embedded within ${\rm GL}(2,\C)^2$.

\subsection{Linear Automorphisms of $\R^4$}
The set of linear automorphisms of
$$\R^4=\{(x_1,x_2,y_1,y_2)\;|\;x_i,y_i\in\R,\;i=1,2\},
$$
is the Lie group ${\rm GL}(4,\R)$ comprising real $4\times 4$ matrices with non-zero determinant. If $L\in{\rm GL}(4,\R)$ we may write
$$
L=\left[\begin{matrix}
                                     A_1&A_2\\
                                     A_3&A_4
                                    \end{matrix}\right],
$$
where $A_i$, $i=1,\dots, 4$ are $2\times 2$ real matrices. There is an identification of ${\rm GL}(4,\R)$ with a real Lie subgroup of the complex Lie group ${\rm GL}(4,\C)$ comprising complex $4\times 4$ matrices with non-zero determinant; we describe this identification below. Let 
$$\C^2=\{(z_1,z_2)\;|\;z_i=x_i+iy_i\in\C,\;i=1,2\}
$$
and let also
$$
\bX=\left[\begin{matrix} x_1\\x_2
                  \end{matrix}\right],\quad
\bY=\left[\begin{matrix} y_1\\y_2
                  \end{matrix}\right],\quad
                  \bU=\left[\begin{matrix} u_1\\u_2
                  \end{matrix}\right],\quad
                  \bV=\left[\begin{matrix} v_1\\v_2
                  \end{matrix}\right],\quad
                  \bZ=\left[\begin{matrix} z_1\\z_2
                  \end{matrix}\right],\quad
                  \bW=\left[\begin{matrix} w_1\\w_2
                  \end{matrix}\right],
$$
where $w_i=u_i+iv_i$, $i=1,2$. For an arbitrary $L\in {\rm GL}(4,\R)$ let
the corresponding linear mapping $L:\R^4\to\R^4$. This is written in matrix form as
\begin{eqnarray*}\label{eq:L}
 L\left(\left[\begin{matrix}
               \bX\\
               \bY
              \end{matrix}\right]\right)=\left[\begin{matrix}
               \bU\\
               \bV
              \end{matrix}\right]&=&\left[\begin{matrix}
                                     A_1&A_2\\
                                     A_3&A_4
                                    \end{matrix}\right]\left[\begin{matrix}
               \bX\\
               \bY
              \end{matrix}\right].
\end{eqnarray*}
 Writing the previous equation as
\begin{eqnarray*}
 &&
 \bU=A_1\bX+A_2\bY,\\
 &&
 \bV=A_3\bX+A_4\bY,
\end{eqnarray*}
and using the relations
$$
\bX=\frac{1}{2}(\bZ+\overline{\bZ}),\quad \bY=\frac{1}{2i}(\bZ-\overline{\bZ}),\quad
\bU=\frac{1}{2}(\bW+\overline{\bW}),\quad \bV=\frac{1}{2i}(\bW-\overline{\bW}),
$$
we have
\begin{eqnarray*}
 &&
 \bW+\overline{\bW}=(A_1+iA_2)\bZ+(A_1-iA_2)\overline{\bZ},\\
 &&
 \bW-\overline{\bW}=(A_4+iA_3)\bZ-(A_4-iA_3)\overline{\bZ}.
\end{eqnarray*}
By adding the above relations we obtain the equation of the linear mapping $L$ in complex form:
\begin{equation*}\label{eq:Lc}
\bW=L(\bZ)=A\bZ+B{\overline \bZ},
\end{equation*}
where
\begin{eqnarray*}\label{eq:AB}
&&
 A=\frac{1}{2}\left((A_1+A_4)+i(A_2+A_3)\right):=\left(\begin{matrix} a_1&a_2\\
          a_3&a_4
         \end{matrix}\right),\\
         &&\label{eq:AB2}
B=\frac{1}{2}\left((A_1-A_4)+i(A_3-A_2)\right):=\left(\begin{matrix} b_1&b_2\\
          b_3&b_4
         \end{matrix}\right),
\end{eqnarray*}
are $2\times 2$ matrices with complex coefficients.
The map $L$ is an isomorphism (and hence a diffeomorphism) if and only if the $4\times 4$ linear system
\begin{eqnarray}
&&\label{eq:cs1}
 \bW=A\bZ+B\overline{\bZ},\\
 &&\label{eq:cs2}
 \overline{\bW}=\overline{B}\bZ+\overline{A}\overline{\bZ},
\end{eqnarray}
admits a unique solution. That is equivalent to saying that the complex matrix
\begin{equation}\label{eq:matL}
{\bf L}=\left(\begin{matrix}
               A&B\\
               \overline{B}&\overline{A}
              \end{matrix}\right)\in {\rm GL}(4,\C).
\end{equation}
Conversely, if (\ref{eq:matL}) holds, then the system defined by (\ref{eq:cs1}) and (\ref{eq:cs2}) admits a unique solution. By adding and subtracting these two equations we obtain
\begin{eqnarray*}
&&
2\bU=\bW+\overline{\bW}=(A+\overline{B})\bZ+(B+\overline{A})\overline{\bZ},\\
&&
2i\bV=\bW-\overline{\bW}=(A-\overline{B})\bZ+(B-\overline{A})\overline{\bZ}.
\end{eqnarray*}
Equivalently, there exists a unique solution to the system
\begin{eqnarray*}
&&
\bU=\Re\left((A+\overline{B})\bZ\right)=\Re\left((A+\overline{B}\right)\bX-
\Im\left((A+\overline{B}\right)\bY\\
&&
\bV=\Im\left((A-\overline{B})\bZ\right)=\Im\left((A-\overline{B})\right)\bX+
\Re\left((A-\overline{B}\right)\bY,
\end{eqnarray*}
that is 
$$
L=\left(\begin{matrix}
\Re\left((A+\overline{B}\right)&\Im\left((A+\overline{B}\right)\\
\Im\left((A-\overline{B})\right)&\Re\left((A-\overline{B}\right)
\end{matrix}\right)\in{\rm GL}(4,\R).
$$
Note that we always have
$$
\det{\bL}=\left|\begin{matrix}
               A&B\\
               \overline{B}&\overline{A}
              \end{matrix}\right|=
              \left|\begin{matrix}
               B&A\\
               \overline{A}&\overline{B}
              \end{matrix}\right|=
              \left|\begin{matrix}
               \overline{A}&\overline{B}\\
               B&A
              \end{matrix}\right|=
\overline{\det{\bL}}.           
$$
If $L\in{\rm GL}(4,\R)$ the following possibilities can then occur for the matrix $\bL\in{\rm GL}(2,\C)$, regarding the ranks of $A$ and $B$:
\begin{enumerate}
\item [{1)}] ${\rm rank}(A)=2$ or ${\rm rank}(B)=2$.
\item [{2)}] ${\rm rank}(A)={\rm rank}(B)=1$. 
\end{enumerate}
In the first case we may explicitly calculate the inverse $L^{-1}$. If ${\rm rank}(A)=2={\rm rank}(B)$, then an application of Schur's formula gives
$$
\bL^{-1}=\left(\begin{matrix}
(A-B\overline{A}^{-1}\overline{B})^{-1}&(\overline{B}-\overline{A}B^{-1}A)^{-1}\\
(B-A\overline{B}^{-1}\overline{A})^{-1}&(\overline{A}-\overline{B}A^{-1}B)^{-1}
\end{matrix}\right).
$$ 
If only $A$ or only $B$ has rank 2, say $A$ with no loss, then we consider the matrices
$$
L=\left(\begin{matrix}
I&0\\
\overline{B}A^{-1}&I\end{matrix}\right),\quad
U=\left(\begin{matrix}
A&B\\
0&\overline{A}-\overline{B}A^{-1}B
\end{matrix}\right).
$$
We have $\bL=LU$ and $\det(\bL)=\det(U)$ by Schur's formula, hence
$$
\bL^{-1}=U^{-1}L^{-1}=\left(\begin{matrix}
A^{-1}&-A^{-1}B(\overline{A}-\overline{B}A^{-1}B)^{-1}\\
0&(\overline{A}-\overline{B}A^{-1}B)^{-1}
\end{matrix}\right)\left(\begin{matrix}
I&0\\
-\overline{B}A^{-1}&I\end{matrix}\right).
$$
In the particular case when one of the $A,B$ is zero, then $\bL$ may be identified with $A\in {\rm GL}(2,\C)$:
$$
\det{\bL}=|\det(A)|^2.
$$ 
Hence in this case the linear isomorphism $L$ of $\R^4$ may be identified to a holomorphic M\"obius transformation acting on $\C P^1$. The case $A=0$ corresponds to the anti-holomorphic M\"obius transformations.

\medskip

The set of linear automorphisms of $\R^4$ is ${\rm GL}(4,\R)$. The following holds.
\begin{prop}\label{prop:L-GL}
The set ${\rm GL}(4,\R)$ is a 16-dimensional real Lie subgroup of ${\rm GL}(4,\C)$. Both subsets of ${\rm GL}(4,\R)$ comprising biholomorphic and bi-antiholomorphic automorphisms are identified to the 4-dimensional complex Lie subgroup ${\rm GL}(2,\C)$.
\end{prop}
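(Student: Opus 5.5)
The plan is to realise the identification of the previous subsection as an explicit injective Lie group homomorphism and read off both claims from it. Define
$$
\Phi:{\rm GL}(4,\R)\to{\rm GL}(4,\C),\qquad \Phi(L)=\bL=\left(\begin{matrix}A&B\\ \overline{B}&\overline{A}\end{matrix}\right),
$$
with $A,B$ given by the formulas above in terms of $A_1,\dots,A_4$. Equivalently, $\Phi(L)=PLP^{-1}$, where $P$ is the constant invertible complex matrix that sends $(\bX,\bY)$ to $(\bZ,\overline{\bZ})$, namely
$$
P=\left(\begin{matrix}I&iI\\ I&-iI\end{matrix}\right).
$$
I will first verify this conjugation formula directly: $P\,L\,(\bX,\bY)^T=(\bW,\overline{\bW})^T$ is exactly the system (\ref{eq:cs1})--(\ref{eq:cs2}). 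From the formula it follows at once that $\Phi$ is a continuous injective group homomorphism. It is also smooth, since it is restricted from the complex-linear automorphism $M\mapsto PMP^{-1}$ of $M(4,\C)$.

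Next I will identify the image. A matrix $M\in{\rm GL}(4,\C)$ has the block form of $\bL$ exactly when it is fixed by the involution
$$
M\mapsto \sigma\overline{M}\sigma,\qquad \sigma=\left(\begin{matrix}0&I\\ I&0\end{matrix}\right).
$$
This involution is an antiholomorphic group automorphism. Its fixed-point set $G$ is therefore a closed subgroup, and hence an embedded real Lie subgroup by Cartan's closed subgroup theorem. The converse computation in the text, which recovers $L$ from $A$ and $B$, shows that $\Phi$ maps onto $G$. The parametrisation $(A,B)\in M(2,\C)^2\cong\R^{16}$, restricted to the open set where $\det\bL\neq0$, gives $\dim_\R G=16$. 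Since $\Phi$ is conjugation by $P$, it is a diffeomorphism onto $G$. This proves the first sentence of the proposition.

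For the second sentence I restrict to the two special strata. When $B=0$, the map $\Phi^{-1}$ sends $A\in{\rm GL}(2,\C)$ to ${\rm diag}(A,\overline{A})$. This is an injective homomorphism, because ${\rm diag}(A,\overline A)\,{\rm diag}(A',\overline{A'})={\rm diag}(AA',\overline{AA'})$. Its image is exactly the set of biholomorphic linear automorphisms, and $\det\bL=|\det A|^2\neq0$ precisely when $A\in{\rm GL}(2,\C)$. So this subset is identified, as a Lie group, with the 4-dimensional complex group ${\rm GL}(2,\C)$.

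For the bi-antiholomorphic matrices $\left(\begin{matrix}0&B\\ \overline B&0\end{matrix}\right)$ there is a complication. This set is not closed under composition: a product of two such matrices is block-diagonal. So the identification $B\mapsto\bL$ with $B\in{\rm GL}(2,\C)$ should be read as a bijection of manifolds, equivalently as the coset $\bL_0\cdot{\rm GL}(2,\C)$ with $\bL_0=\sigma$, which is the conjugation map $\bZ\mapsto\overline{\bZ}$. Indeed
$$
\left(\begin{matrix}0&B\\ \overline B&0\end{matrix}\right)=\sigma\,{\rm diag}(\overline B,B).
$$
I expect the main obstacle to be stating precisely in what sense the bi-antiholomorphic set is ``identified'' with ${\rm GL}(2,\C)$. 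I would handle it with this coset description, noting that together the two strata form the group ${\rm GL}(2,\C)\rtimes\mathbb Z_2$.
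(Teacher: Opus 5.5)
Your proposal is correct and uses the same identification $L\mapsto\bL$, specialised to $B=0$ and $A=0$, that the paper relies on in the discussion preceding the proposition (the paper gives no separate proof). Writing $\bL=PLP^{-1}$ and invoking the closed subgroup theorem supplies the homomorphism and Lie-subgroup properties the paper leaves implicit, and your observation that the bi-antiholomorphic matrices form the coset $\sigma\cdot{\rm GL}(2,\C)$ rather than a subgroup correctly sharpens the statement.

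One caution on your ``equivalently'': the paper's displayed formulas for $A,B$ in terms of $A_1,\dots,A_4$ have their imaginary parts interchanged. The correct formulas are $A=\frac12((A_1+A_4)+i(A_3-A_2))$ and $B=\frac12((A_1-A_4)+i(A_2+A_3))$. So define $\Phi$ through $\bW=A\bZ+B\overline{\bZ}$, as your conjugation check does, rather than through the printed formulas.
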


\subsection{${\rm SPCR}$ Linear Automorphisms of $\R^4$}
Recall that if 
$$
A=\left(\begin{matrix}
a_1&a_2\\
a_3&a_4\end{matrix}\right)
$$
is a matrix with complex entries, then its $\text{adjoint}$ is
$$
{\rm adj}(A)=\left(\begin{matrix}
a_4&-a_2\\
-a_3&a_1\end{matrix}\right).
$$
\begin{lem}
If $A, B$ are $2\times 2$ complex matrices, then
\begin{equation}\label{eq:sumdets}
\det(A+B)=\det(A)+{\rm tr}(A{\rm adj}(B))+\det(B).
\end{equation}
\end{lem}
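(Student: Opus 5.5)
The plan is a direct entrywise computation. I will write $A=\left(\begin{matrix}a_1&a_2\\a_3&a_4\end{matrix}\right)$ and $B=\left(\begin{matrix}b_1&b_2\\b_3&b_4\end{matrix}\right)$ as in the notation above, and expand both sides of (\ref{eq:sumdets}).

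First, expanding the left-hand side by the $2\times 2$ determinant formula gives
$$
\det(A+B)=(a_1+b_1)(a_4+b_4)-(a_2+b_2)(a_3+b_3).
$$
Multiplying out, the terms split into three groups. The terms involving only $a$'s are $a_1a_4-a_2a_3=\det(A)$. The terms involving only $b$'s are $b_1b_4-b_2b_3=\det(B)$. The mixed terms are $a_1b_4+a_4b_1-a_2b_3-a_3b_2$.

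Second, I compute the trace on the right-hand side. Using ${\rm adj}(B)=\left(\begin{matrix}b_4&-b_2\\-b_3&b_1\end{matrix}\right)$, the diagonal entries of $A\,{\rm adj}(B)$ are $a_1b_4-a_2b_3$ and $-a_3b_2+a_4b_1$. Hence ${\rm tr}(A\,{\rm adj}(B))=a_1b_4+a_4b_1-a_2b_3-a_3b_2$, which is exactly the mixed group above. This proves the identity.

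There is no genuine obstacle here; the only point needing care is getting the signs of the off-diagonal entries of ${\rm adj}(B)$ right. As a sanity check I would note that the mixed term is the polarization of $\det$: it must be bilinear and symmetric in $(A,B)$, and it must equal $2\det(A)$ when $B=A$. Both hold, since ${\rm tr}(A\,{\rm adj}(A))={\rm tr}(\det(A)I)=2\det(A)$, and ${\rm tr}(A\,{\rm adj}(B))={\rm tr}(B\,{\rm adj}(A))$ by the explicit formula.
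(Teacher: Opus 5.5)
The paper states this lemma without proof, so there is no argument of its own to compare against. Your direct entrywise expansion is correct and complete: the mixed terms of $\det(A+B)$ are exactly $a_1b_4+a_4b_1-a_2b_3-a_3b_2={\rm tr}(A\,{\rm adj}(B))$, with the signs of ${\rm adj}(B)$ handled correctly. That explicit formula also justifies the symmetry ${\rm tr}(A\,{\rm adj}(B))={\rm tr}(B\,{\rm adj}(A))$, which the paper asserts without comment right after the lemma.
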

Since ${\rm tr}(A{\rm adj}(B))={\rm tr}(B{\rm adj}(A))$, we can rewrite \ref{eq:sumdets} as
\begin{equation}\label{eq:sumdets2}
\det(A+B)=\det(A)+{\rm tr}(B{\rm adj}(A))+\det(B).
\end{equation}
\begin{lem}\label{lem:detLformula}
Let $L$ be a linear transformation of $\R^4$ with complex matrix ${\bf L}$ as in (\ref{eq:matL}). Then its determinant is given by
\begin{eqnarray*}
\det(\bL)&=&|\det(A)|^2+|\det(B)|^2-|{\rm tr}(A\cdot{\rm adj}(\overline{B}))|^2\\
&&+2\Re\left(\left|\begin{matrix}
a_1&a_3\\
\\
\overline{a_2}&\overline{a_4}
\end{matrix}\right|\cdot\left|\begin{matrix}
b_1&b_3\\
\\
\overline{b_2}&\overline{b_4}
\end{matrix}\right|\right).
\end{eqnarray*}
\end{lem}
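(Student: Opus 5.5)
The plan is to compute $\det(\bL)$ with Schur's formula on a dense open set, simplify the result using the Lemma giving (\ref{eq:sumdets}), and then extend to all $A,B$ by continuity.

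\emph{Step 1: Schur reduction.} Assume first that $\det(A)\neq 0$. Using the factorisation $\bL=LU$ displayed above, we get
$$\det(\bL)=\det(A)\,\det\bigl(\overline{A}-\overline{B}A^{-1}B\bigr).$$
For $2\times 2$ matrices we have $A^{-1}={\rm adj}(A)/\det(A)$ and $\det(cM)=c^2\det(M)$. Hence
$$\det(\bL)=\frac{1}{\det(A)}\det\bigl(\det(A)\,\overline{A}-X\bigr),\qquad X:=\overline{B}\,{\rm adj}(A)\,B.$$

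\emph{Step 2: apply the sum-of-determinants lemma.} Apply (\ref{eq:sumdets2}) with $P=\det(A)\overline{A}$ and $Q=-X$. We have $\det(P)=\det(A)^2\det(\overline{A})$ and ${\rm adj}(P)=\det(A)\,{\rm adj}(\overline{A})$. Moreover
$$\det(X)=\det(\overline{B})\det({\rm adj}A)\det(B)=|\det(B)|^2\det(A).$$
Dividing by $\det(A)$ then gives
$$\det(\bL)=|\det(A)|^2+|\det(B)|^2-{\rm tr}\bigl(\overline{B}\,{\rm adj}(A)\,B\,{\rm adj}(\overline{A})\bigr).$$
So the statement reduces to the purely algebraic identity
$$
{\rm tr}\bigl(\overline{B}\,{\rm adj}(A)\,B\,{\rm adj}(\overline{A})\bigr)=|{\rm tr}(A\,{\rm adj}(\overline{B}))|^2-2\Re\left(\left|\begin{matrix}a_1&a_3\\ \overline{a_2}&\overline{a_4}\end{matrix}\right|\cdot\left|\begin{matrix}b_1&b_3\\ \overline{b_2}&\overline{b_4}\end{matrix}\right|\right).
$$

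\emph{Step 3: verify the trace identity.} This is the main obstacle, and I would do it by direct expansion in the entries. The left side is a sum of 16 monomials of the form $a_i\overline{a_j}b_k\overline{b_l}$. Note that ${\rm tr}(A\,{\rm adj}(\overline{B}))=a_1\overline{b_4}-a_2\overline{b_3}-a_3\overline{b_2}+a_4\overline{b_1}$. Its squared modulus therefore produces the 16 monomials pairing $a_i$ with its ``adjoint-partner'' $\overline{b}$ entry.

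Subtracting the two expansions, the monomials that do not cancel should be exactly the eight cross terms. These come from
$$(a_1\overline{a_4}-a_3\overline{a_2})(b_1\overline{b_4}-b_3\overline{b_2})$$
together with its complex conjugate. I would organise the bookkeeping by grouping monomials according to the index pattern $(i,j)$ of $a_i\overline{a_j}$. The left side is manifestly real, since it equals $|\det A|^2+|\det B|^2-\det\bL$ and $\det\bL$ is real, as shown above. This reality serves as a consistency check, and it means only half the terms need to be matched.

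\emph{Step 4: remove the assumption $\det(A)\neq 0$.} Both sides of the claimed formula are polynomial in the real and imaginary parts of the entries of $A$ and $B$. The set $\{\det(A)\neq 0\}$ is dense in $\C^8$. By continuity, the formula therefore holds for all $A,B$, including the SPCR case ${\rm rank}(A)={\rm rank}(B)=1$ that is of interest later.
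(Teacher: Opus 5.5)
\emph{Gap: Step 3 cannot be completed, because the trace identity it asserts is false, and so is the lemma itself.}

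Steps 1, 2 and 4 are correct. Together they prove, for all $A,B$, the valid identity $\det(\bL)=|\det(A)|^2+|\det(B)|^2-{\rm tr}\bigl(\overline{B}\,{\rm adj}(A)\,B\,{\rm adj}(\overline{A})\bigr)$. Step 3 carries the whole content of the lemma, and you only assert it (``the monomials that do not cancel should be exactly the eight cross terms''). Take
\[
A=\left(\begin{matrix}1&0\\ i&0\end{matrix}\right),\qquad B=\left(\begin{matrix}0&i\\ 0&1\end{matrix}\right).
\]
Then $\det(A)=\det(B)=0$ and ${\rm tr}(A\,{\rm adj}(\overline{B}))=a_1\overline{b_4}-a_3\overline{b_2}=1-1=0$. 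Also $a_1\overline{a_4}-a_3\overline{a_2}=0$, so the lemma's right-hand side is $0$. However, rows $1,2$ of $\bL$ are supported in columns $1,4$, and rows $3,4$ in columns $2,3$. The blocks are $\left(\begin{matrix}1&i\\ i&1\end{matrix}\right)$ and $\left(\begin{matrix}-i&1\\ 1&-i\end{matrix}\right)$, so $\det(\bL)=2\cdot(-2)=-4$. Correspondingly your trace equals $4$, not $0$. This example has ${\rm rank}(A)={\rm rank}(B)=1$ and $\det(\bL)\neq 0$, so it lies exactly in the SPCR case you single out in Step 4.

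If you carry out the bookkeeping, all monomials match except two pairs. Your left side contains $-2\Re(a_1\overline{a_3}\,b_4\overline{b_2})-2\Re(a_2\overline{a_4}\,b_3\overline{b_1})$. By contrast, $|{\rm tr}(A\,{\rm adj}(\overline{B}))|^2$ contributes $-2\Re(a_1\overline{a_3}\,b_2\overline{b_4})-2\Re(a_2\overline{a_4}\,b_1\overline{b_3})$, and the cross term contains neither. The left side of your trace identity minus its right side equals $4\Im(a_1\overline{a_3})\Im(b_4\overline{b_2})+4\Im(a_2\overline{a_4})\Im(b_3\overline{b_1})$. Hence the true formula for $\det(\bL)$ is the lemma's right-hand side minus this quantity. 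Your reality check cannot detect this, since both sides are separately real.

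The paper's own proof breaks at the same point. Its cofactor expansion correctly reaches $\det(\bL)=|\det A|^2+|\det B|^2-|x|^2-|y|^2-2\Re(uv)$, where $x=a_1\overline{b_4}-a_2\overline{b_3}$, $y=a_3\overline{b_2}-a_4\overline{b_1}$, $u=a_1\overline{b_2}-a_2\overline{b_1}$ and $v=b_3\overline{a_4}-b_4\overline{a_3}$. Its final step rewrites $-2\Re(uv)-2\Re(x\overline{y})$ as $2\Re\bigl((a_1\overline{a_4}-a_3\overline{a_2})(b_1\overline{b_4}-b_3\overline{b_2})\bigr)$, and this drops exactly those two terms. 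The most your route (or the paper's) can establish is the corrected formula.
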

\begin{proof}
Let
$$
\det(\bL)=\left|\begin{matrix}
a_1&a_2&b_1&b_2\\
a_3&a_4&b_3&b_4\\
\overline{b_1}&\overline{b_2}&\overline{a_1}&\overline{a_2}\\
\overline{b_3}&\overline{b_4}&\overline{a_3}&\overline{a_4}
\end{matrix}\right|.
$$
By expanding it w.r.t the first row, we have
\begin{eqnarray*}
\det(\bL)&=&a_1\left|\begin{matrix}
a_4&b_3&b_4\\
\overline{b_2}&\overline{a_1}&\overline{a_2}\\
\overline{b_4}&\overline{a_3}&\overline{a_4}
\end{matrix}\right|
-a_2\left|\begin{matrix}
a_3&b_3&b_4\\
\overline{b_1}&\overline{a_1}&\overline{a_2}\\
\overline{b_3}&\overline{a_3}&\overline{a_4}
\end{matrix}\right|
+b_1\left|\begin{matrix}
a_3&a_4&b_4\\
\overline{b_1}&\overline{b_2}&\overline{a_2}\\
\overline{b_3}&\overline{b_4}&\overline{a_4}
\end{matrix}\right|
-b_2\left|\begin{matrix}
a_3&a_4&b_3\\
\overline{b_1}&\overline{b_2}&\overline{a_1}\\
\overline{b_3}&\overline{b_4}&\overline{a_3}
\end{matrix}\right|.
\end{eqnarray*}
By expanding the first two minors with respect to the first row and the latter two with respect to the third row we obtain:
\begin{eqnarray*}
\det(\bL)&=&|\det(A)|^2+|\det(B)|^2-|\left|\begin{matrix}
a_1&a_2\\
\\
\overline{b_3}&\overline{b_4}
\end{matrix}\right||^2-
|\left|\begin{matrix}
a_3&a_4\\
\\
\overline{b_1}&\overline{b_2}
\end{matrix}\right||^2
-2\Re\left(\left|\begin{matrix}
a_1&a_2\\
\\
\overline{b_1}&\overline{b_2}
\end{matrix}\right|\cdot\left|\begin{matrix}
b_3&b_4\\
\\
\overline{a_3}&\overline{a_4}
\end{matrix}\right|\right).
\end{eqnarray*}
Since
$$
{\rm tr}(A\cdot{\rm adj}(\overline{B}))=\left|\begin{matrix}
a_1&a_2\\
\\
\overline{b_3}&\overline{b_4}
\end{matrix}\right|-\left|\begin{matrix}
a_3&a_4\\
\\
\overline{b_1}&\overline{b_2}
\end{matrix}\right|,
$$
we have
$$
|{\rm tr}(A\cdot{\rm adj}(\overline{B}))|^2=|\left|\begin{matrix}
a_1&a_2\\
\\
\overline{b_3}&\overline{b_4}
\end{matrix}\right||^2+
|\left|\begin{matrix}
a_3&a_4\\
\\
\overline{b_1}&\overline{b_2}
\end{matrix}\right||^2-2\Re\left(\left|\begin{matrix}
a_1&a_2\\
\\
\overline{b_3}&\overline{b_4}
\end{matrix}\right|\cdot\left|\begin{matrix}
\overline{a_3}&\overline{a_4}\\
\\
b_1&b_2
\end{matrix}\right|\right).
$$
and thus
\begin{eqnarray*}
\det(\bL)&=&|\det(A)|^2+|\det(B)|^2-|{\rm tr}(A\cdot{\rm adj}(\overline{B}))|^2\\
&&-2\Re\left(\left|\begin{matrix}
a_1&a_2\\
\\
\overline{b_1}&\overline{b_2}
\end{matrix}\right|\cdot\left|\begin{matrix}
b_3&b_4\\
\\
\overline{a_3}&\overline{a_4}
\end{matrix}\right|\right)-2\Re\left(\left|\begin{matrix}
a_1&a_2\\
\\
\overline{b_3}&\overline{b_4}
\end{matrix}\right|\cdot\left|\begin{matrix}
\overline{a_3}&\overline{a_4}\\
\\
b_1&b_2
\end{matrix}\right|\right)\\
&=&|\det(A)|^2+|\det(B)|^2-|{\rm tr}(A\cdot{\rm adj}(\overline{B}))|^2\\
&&+2\Re\left(\left|\begin{matrix}
a_1&a_3\\
\\
\overline{a_2}&\overline{a_4}
\end{matrix}\right|\cdot\left|\begin{matrix}
b_1&b_3\\
\\
\overline{b_2}&\overline{b_4}
\end{matrix}\right|\right).
\end{eqnarray*}
\end{proof}
The following elementary lemma is basic for our subsequent discussion.
\begin{lem}\label{lem:detL}
Let $L$ be a linear transformation of $\R^4$ with complex matrix ${\bf L}$ as in (\ref{eq:matL}) and such that $\det(\bL)\neq 0$. Then
$$
{\rm rank}(A)={\rm rank}(B)=1,
$$
if and only if the following hold:
\begin{eqnarray}
\label{eq:dets}&&
\det(A+\overline{B})+\det(A-\overline{B})=0,\\
&&\label{eq:detL}
\det({\bf L})=-|\det(A+\overline{B})|^2<0.
\end{eqnarray}
\end{lem}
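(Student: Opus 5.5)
The plan has two parts. First translate the rank condition into determinant conditions using \eqref{eq:sumdets}. Then handle the two directions separately: a factorization for the forward implication, and Lemma \ref{lem:detLformula} for the converse.

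\emph{Reformulating the rank condition.} Since $\det(\bL)\neq 0$, $A$ and $B$ cannot both be singular of rank $\le 1$ with one of them zero. So ${\rm rank}(A)={\rm rank}(B)=1$ is equivalent to $\det(A)=\det(B)=0$. Apply \eqref{eq:sumdets} to $A$ and $\pm\overline{B}$ and add. Because ${\rm tr}(A\,{\rm adj}(\overline{B}))$ is linear in $\overline{B}$, the cross terms cancel and we get
$$
\det(A+\overline{B})+\det(A-\overline{B})=2\det(A)+2\,\overline{\det(B)}.
$$
Hence \eqref{eq:dets} says exactly $\det(A)=-\overline{\det(B)}$. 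Subtracting instead of adding gives $\det(A+\overline{B})-\det(A-\overline{B})=2\,{\rm tr}(A\,{\rm adj}(\overline{B}))$. In particular, when $\det A=\det B=0$ we have ${\rm tr}(A\,{\rm adj}(\overline{B}))=\det(A+\overline{B})$.

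\emph{Forward direction.} Assume both ranks equal $1$. Then \eqref{eq:dets} is immediate from the identity above. For \eqref{eq:detL}, write $A=u v^{T}$ and $B=x y^{T}$ with $u,v,x,y\in\C^2$ nonzero. Factor $\bL=MN$, where $M$ is the $4\times 4$ matrix with columns $(u;0),(0;\overline{x}),(x;0),(0;\overline{u})$, and $N$ has rows $(v^{T},0),(\overline{y}^{T},0),(0,y^{T}),(0,\overline{v}^{T})$. A column (respectively row) transposition makes $M$ and $N$ block diagonal. This gives $\det\bL$ as a signed product of $|\det[u\ x]|^2$ and $|\det[v\ y]|^2$. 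A sanity check with $A=E_{11}$, $B=E_{22}$, where $\det\bL=-1$, fixes the overall sign as negative. Separately, $\det(A+\overline{B})=\det(uv^T+\overline{x}\,\overline{y}^T)=\det[u\ \overline{x}]\det[v\ \overline{y}]$. To finish, I must compare $|\det[u\ x]|\,|\det[v\ y]|$ with $|\det[u\ \overline{x}]|\,|\det[v\ \overline{y}]|$.

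\emph{Main obstacle.} That last comparison is where I expect the difficulty, because the two moduli need not agree term by term. For example, with $u=(1,i)$ and $x=(1,i)$ one gets $\det[u\ x]=0$ but $\det[u\ \overline{x}]=-2i$. I would first try to show that the products agree, in the form given by Lemma \ref{lem:detLformula}. With $\det A=\det B=0$ that lemma yields
$$
\det\bL=-|\det(A+\overline{B})|^2+2\Re\bigl((a_1\overline{a_4}-a_3\overline{a_2})(b_1\overline{b_4}-b_3\overline{b_2})\bigr).
$$
For rank-one $A$, the minor $a_1\overline{a_4}-a_3\overline{a_2}$ equals $2i\,v_1\overline{v_2}\,\Im(u_1\overline{u_2})$, and similarly for $B$. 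So \eqref{eq:detL} reduces to showing that this $\Re$-term vanishes. I would check carefully whether the term is forced to vanish or whether the hypotheses need strengthening. The example above suggests this is where the statement has to be scrutinized.

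\emph{Converse.} Assume \eqref{eq:dets} and \eqref{eq:detL}. By the reformulation, \eqref{eq:dets} gives $\det A=-\overline{\det B}=:\delta$. Substitute this into Lemma \ref{lem:detLformula}, using \eqref{eq:sumdets} to rewrite ${\rm tr}(A\,{\rm adj}(\overline{B}))=\det(A+\overline{B})-\delta-\overline{\det B}$. Comparing with \eqref{eq:detL} should then force $|\delta|^2$-type terms to vanish, giving $\delta=0$. Ranks $1$ follow, since ranks $0$ are excluded by $\det\bL\neq0$.
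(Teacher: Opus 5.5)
Your proposal does not prove the lemma. The forward direction stops at the comparison you call the main obstacle, and the converse rests on the unsupported claim that \eqref{eq:detL} ``should force'' $\delta=0$. Your suspicion is well founded: neither gap can be closed, because the statement is false in both directions.

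In the forward direction your factorization is right, with one slip. The $N$-factor is $\det[v\ \overline{y}]\det[y\ \overline{v}]=-|\det[v\ \overline{y}]|^2$, not a multiple of $|\det[v\ y]|^2$. Hence $\det\bL=-|\det[u\ x]|^2|\det[v\ \overline{y}]|^2$, while $|\det(A+\overline{B})|^2=|\det[u\ \overline{x}]|^2|\det[v\ \overline{y}]|^2$. So \eqref{eq:detL} is equivalent to $|\det[u\ x]|=|\det[u\ \overline{x}]|$. A short computation gives $|\det[u\ x]|^2-|\det[u\ \overline{x}]|^2=-4\,\Im(u_1\overline{u_2})\,\Im(x_1\overline{x_2})$, which need not vanish.

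Your example $u=x=(1,i)^T$ is not admissible, since it gives $\det\bL=0$. Take instead $u=(1,i)^T$, $x=(1,2i)^T$, $v=(1,0)^T$, $y=(0,1)^T$, i.e.\ $A=\begin{pmatrix}1&0\\ i&0\end{pmatrix}$ and $B=\begin{pmatrix}0&1\\ 0&2i\end{pmatrix}$. The real form of $(z_1,z_2)\mapsto(z_1+\overline{z_2},\,iz_1+2i\overline{z_2})$ has determinant $-1$, so $\det\bL=-1$. On the other hand $\det(A+\overline{B})=-3i$ and $\det(A-\overline{B})=3i$. Thus \eqref{eq:dets} holds but $-|\det(A+\overline{B})|^2=-9\neq\det\bL$.

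The paper's proof passes over this point by asserting that $\lambda_1,\lambda_2,\mu_1,\mu_2$ may be taken real. Only one common complex scalar can be moved into $\zeta$, so this tacitly assumes $u$ and $x$ are complex multiples of real vectors. That is exactly when the $\Im$-product above vanishes.

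Your fallback via Lemma \ref{lem:detLformula} cannot rescue the argument either. The four-term expansion in that lemma's proof is correct, but its final simplification drops $4\Im(a_1\overline{a_3})\Im(b_2\overline{b_4})+4\Im(a_2\overline{a_4})\Im(b_1\overline{b_3})$. In the example above your $\Re$-term is $0$ (since $a_2=a_4=0$), so that lemma predicts $-9$ instead of the true value $-1$.

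For the converse, your reduction to $\det A=\delta$, $\det B=-\overline{\delta}$ and ${\rm tr}(A\,{\rm adj}(\overline{B}))=\det(A+\overline{B})$ is correct. But then \eqref{eq:detL} only says that $2|\delta|^2+2\Re\bigl((a_1\overline{a_4}-a_3\overline{a_2})(b_1\overline{b_4}-b_3\overline{b_2})\bigr)$, plus the dropped $\Im$-terms, vanishes. The $\Re$-term can be negative, so nothing forces $\delta=0$.

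For instance, take $A=\mathrm{diag}(2,1)$ and $B=\mathrm{diag}(1,-2)$. Then $\bL$ splits into the blocks $\begin{pmatrix}2&1\\ 1&2\end{pmatrix}$ and $\begin{pmatrix}1&-2\\ -2&1\end{pmatrix}$, so $\det\bL=-9$. Also $\det(A+\overline{B})=-3$ and $\det(A-\overline{B})=3$. So \eqref{eq:dets} and \eqref{eq:detL} both hold (here $2|\delta|^2=8$ is cancelled by the $\Re$-term $-8$), yet ${\rm rank}(A)={\rm rank}(B)=2$. The paper's alternative proof of the converse fails on the same matrix, since \eqref{eq:detLf} evaluates to $-1$ there.

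What your argument does establish is the following. If ${\rm rank}(A)={\rm rank}(B)=1$ and $\det\bL\neq0$, then \eqref{eq:dets} holds and $\det\bL<0$. Moreover \eqref{eq:detL} holds exactly when $\Im(u_1\overline{u_2})\Im(x_1\overline{x_2})=0$. The converse needs a further hypothesis.
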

\begin{proof}
The rank condition is equivalent to saying that there exist complex vectors $(\zeta_1,\zeta_2)$,  $(\zeta_3,\zeta_4)$ and complex numbers $\lambda_1,\lambda_2$ and $\mu_1,\mu_2$ such that $|\lambda_1|+|\lambda_2|\neq 0$ and  $|\mu_1|+|\mu_2|\neq 0$ which satisfy
\begin{eqnarray*}
 &&
 (a_1,a_2)=\lambda_1(\zeta_1,\zeta_2),\quad (a_3,a_4)=\lambda_2(\zeta_1,\zeta_2),\\
 &&
 (b_1,b_2)=\mu_1(\zeta_3,\zeta_4),\quad (b_3,b_4)=\mu_2(\zeta_3,\zeta_4).
\end{eqnarray*}
Note that we can assume $\lambda_1,\lambda_2$ and $\mu_1,\mu_2$ are real. Suppose with no loss of generality that $\lambda_1\neq 0$ and $\mu_1\neq 0$. Then
\begin{eqnarray*}
 \det({\bf L})&=&\lambda_1\mu_1\left|\begin{matrix}
                        \zeta_1&\zeta_2&\frac{\mu_1}{\lambda_1}\zeta_3&\frac{\mu_1}{\lambda_1}\zeta_4\\
                        \\
                        \lambda_2\zeta_1&\lambda_2\zeta_2&\mu_2\zeta_3&\mu_2\zeta_4\\
                        \\
                        \overline{\zeta_3}&\overline{\zeta_4}&\frac{\lambda_1}{\mu_1}\overline{\zeta_1}&\frac{\lambda_1}{\mu_1}\overline{\zeta_2}\\
                        \\
                        \overline{\mu_2}\overline{\zeta_3}& \overline{\mu_2}\overline{\zeta_4}&\lambda_2\overline{\zeta_1}&\lambda_2\overline{\zeta_2}
                     \end{matrix}\right|\\
                     &=&\left|\begin{matrix}
                               \zeta_1&\zeta_2&\frac{\mu_1}{\lambda_1}\zeta_3&\frac{\mu_1}{\lambda_1}\zeta_4\\
                        \\
                        0&0&(\mu_2\lambda_1-\lambda_2\mu_1)\zeta_3&(\mu_2\lambda_1-\lambda_2\mu_1)\zeta_4\\
                        \\
                        \overline{\zeta_3}&\overline{\zeta_4}&\frac{\lambda_1}{\mu_1}\overline{\zeta_1}&\frac{\lambda_1}{\mu_1}\overline{\zeta_2}\\
                        \\
                        0&0&(\mu_1\lambda_2-\mu_2\lambda_1)\overline{\zeta_1}&(\mu_1\lambda_2-\mu_2\lambda_1)\overline{\zeta_2}
                              \end{matrix}\right|.
\end{eqnarray*}
Expanding in the first column gives
\begin{eqnarray*}
 \det({\bf L})&=&(\zeta_2\overline{\zeta_3}-\zeta_1\overline{\zeta_4})\left|\begin{matrix}
                        (\mu_2\lambda_1-\lambda_2\mu_1)\zeta_3&(\mu_2\lambda_1-\lambda_2\mu_1)\zeta_4\\
                        \\
                        (\mu_1\lambda_2-\mu_2\lambda_1)\overline{\zeta_1}&(\mu_1\lambda_2-\mu_2\lambda_1)\overline{\zeta_2}
                              \end{matrix}\right|\\
                              &=&-|\zeta_2\overline{\zeta_3}-\zeta_1\overline{\zeta_4}|^2(\mu_2\lambda_1-\lambda_2\mu_1)^2.
\end{eqnarray*}
Consider now the matrices
$$
A+\overline{B}=\left[\begin{matrix}
                      \lambda_1\zeta_1+\mu_1\overline{\zeta_3}&\lambda_1\zeta_2+\mu_1\overline{\zeta_4}\\
                      \\
                      \lambda_2\zeta_1+\mu_2\overline{\zeta_3}&\lambda_2\zeta_2+\mu_2\overline{\zeta_4}
                     \end{matrix}\right],\quad\overline{A}-B=\left|\begin{matrix}
                     \lambda_1\overline{\zeta_1}-\mu_1\zeta_3&\lambda_1\overline{\zeta_2}-\mu_1\zeta_4\\
                     \\
                     \lambda_2\overline{\zeta_1}-\mu_2\zeta_3&\lambda_2\overline{\zeta_2}-\mu_2\zeta_4
                     \end{matrix}\right|.
$$
Calculating straightforwardly, we obtain
\begin{eqnarray*}
 &&
 \det(A+\overline{B})=(\lambda_1\mu_2-\mu_1\lambda_2)(\zeta_1\overline{\zeta_4}-\zeta_2\overline{\zeta_3}),\\
 &&
 \det(A-\overline{B})=(\lambda_1\mu_2-\mu_1\lambda_2)(\overline{\zeta_3}\zeta_2-\overline{\zeta_4}\zeta_1),
\end{eqnarray*}
which gives
$$
\det(A+\overline{B})=-\det(A-\overline{B}).
$$
Multiplying the above relations we have
$$
 \det(A+\overline{B})\cdot \det(\overline{A}-B)=-|\zeta_2\overline{\zeta_3}-\zeta_1\overline{\zeta_4}|^2(\mu_2\lambda_1-\lambda_2\mu_1)^2,
$$
and the proof is complete.
\end{proof}
\noindent{\it Alternative proof.}
\begin{eqnarray*}
\det(\bL)&=&|\det(A)|^2+|\det(B)|^2-|\left|\begin{matrix}
a_1&a_2\\
\\
\overline{b_3}&\overline{b_4}
\end{matrix}\right| |^2-|\left|\begin{matrix}
a_3&a_4\\
\\
\overline{b_1}&\overline{b_2}
\end{matrix}\right| |^2
+2\Re\left(\left|\begin{matrix}
a_1&a_2\\
\\
\overline{b_3}&\overline{b_4}
\end{matrix}\right|\cdot\left|\begin{matrix}
a_3&a_4\\
\\
\overline{b_1}&\overline{b_2}
\end{matrix}\right|\right)\\
&=&|\det(A)|^2+|\det(B)|^2-|\det(X)|^2-|\det(\overline{Y})|^2+2\Re\left(\det(X)\cdot\det(Y)\right)\\
&=&|\det(A)|^2+|\det(B)|^2-|\det(X)-\det(\overline{Y})|^2
\end{eqnarray*}
Since
$$
|\det(A+\overline{B})|^2+|\det(A-\overline{B})|^2=2|\det(A)+\det(\overline{B})|^2+2|\det(X)-\det(\overline{Y})|^2,
$$
we obtain the formula
\begin{eqnarray}\label{eq:detLf}
\det(\bL)&=&2\left(|\det(A)|^2+|\det(B)|^2+\Re\left(\det(A)\cdot\det(B)\right)\right)\\
\notag &&-\frac{1}{2}|\det(A+\overline{B})|^2-\frac{1}{2}|\det(A-\overline{B})|^2.
\end{eqnarray}
Suppose now that $\det(A)=\det(B)=0$. We plug this into the general relations
\begin{eqnarray}
&&\label{eq:g1}
\det(A+\overline{B})=\det(A)+\det(X)-\det(\overline{Y})+
\det(\overline{B}),\\
&&\label{eq:g2}
\det(A-\overline{B})=\det(A)-\det(X)+\det(\overline{Y})+
\det(\overline{B}),
\end{eqnarray}
we obtain (\ref{eq:dets}). This relation together with (\ref{eq:detLf}) deduce (\ref{eq:detL}).

Conversely, suppose that (\ref{eq:dets}) and (\ref{eq:detL}) hold. Plugging them into (\ref{eq:detLf}) deduces
$$
\Re\left(\det(A)\cdot\det(B)\right)=0.
$$
By adding (\ref{eq:g1}) and (\ref{eq:g2}) and using  (\ref{eq:dets}) we also have
$$
\det(A)+\det(\overline{B})=0.
$$
Therefore,
\begin{eqnarray*}
0&=&|\det(A)+\det(\overline{B})|^2\\
&=&|\det(A)|^2+|\det(B)|^2+2\Re\left(\det(A)\cdot\det(B)\right)\\
&=&|\det(A)|^2+|\det(B)|^2
\end{eqnarray*}
and the proof is complete.
\qed

\begin{defn}
A ${\rm SPCR}$ linear automorphism $L$ of $\R^4$ is an automorphism of $\R^4$ which satisfies the assumptions of Lemma \ref{lem:detL}. We denote the set of SPCR automorphisms by $\widetilde\calM\subset\fL_\R$.  
\end{defn}
We define an equivalence relation $\sim$ on the set $\widetilde\calM$ as follows: If $L,L'\in\widetilde\calM$, we say that $L\sim L'$ if there exists an $a\neq 0$ such that $\bL=a\bL'$. We then have $\det(\bL)=a^4\det(\bL')$ and the sign of the determinant is preserved negative. Moreover, the orbit space $\calM$ is identified to the set of linear automorphisms $L$ of $\R^4$ such that $\det(\bL)=-1$. For such an $L$, Lemma \ref{lem:detL} gives
$$
\det(A+\overline{B})=-\frac{1}{\det(\overline{A}-B)}=-\det\left((\overline{A}-B)^{-1}\right).
$$ 

\begin{prop}\label{prop:SPCReqs}
 Let $F:{\rm GL}(2,\C)^2\to\C^2$ be the smooth function given by
 $$
 F(M,N)=\left(\det(M+\overline{N}),\det(M-\overline{N})\right).
 $$
 Then the set $\calM$ of SPCR linear automorphisms of $\R^4$ is identified to $\calM^*=F^{-1}(0,0)$, the $F$-inverse image of $(0,0)\in\C^2$. 
\end{prop}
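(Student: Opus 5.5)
The plan is to exhibit an explicit change of variables $(A,B)\mapsto(M,N)$ that turns the two rank conditions $\det(A)=\det(B)=0$ into the vanishing of $F$, and that turns the invertibility of $\bL$ into the condition $M,N\in{\rm GL}(2,\C)$. The natural choice, suggested by the proof of Lemma \ref{lem:detL}, is
$$
\Phi(L)=(M,N):=\left(A+\overline{B},\ \overline{A}-B\right).
$$
This map is a real-linear bijection from pairs of $2\times 2$ complex matrices to pairs of $2\times 2$ complex matrices, with inverse
$$
A=\tfrac12\left(M+\overline{N}\right),\qquad B=\tfrac12\left(\overline{M}-N\right).
$$
In particular $M+\overline{N}=2A$ and $M-\overline{N}=2\overline{B}$. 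Hence
$$
F(M,N)=\left(4\det(A),\,4\overline{\det(B)}\right),
$$
so that $F(M,N)=(0,0)$ if and only if $\det(A)=\det(B)=0$.

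Next I would check that $\Phi$ maps SPCR automorphisms into ${\rm GL}(2,\C)^2$. Let $L$ be SPCR. By Lemma \ref{lem:detL},
$$
\det(\bL)=-|\det(A+\overline{B})|^2\neq 0,
$$
so $\det M\neq 0$. Moreover $\det(A-\overline{B})=-\det(A+\overline{B})\neq 0$, and conjugating gives $\det N=\det(\overline{A}-B)\neq0$. Since the ranks are $1$, we also have $\det A=\det B=0$, so $\Phi(L)\in F^{-1}(0,0)$.

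Conversely, take $(M,N)\in{\rm GL}(2,\C)^2$ with $F(M,N)=(0,0)$, and define $A,B$ by the inverse formulas above. Then $\det A=\det B=0$, so both ranks are at most $1$. To exclude rank $0$, suppose $A=0$. Then $M=\overline{B}$, so $\det M=\overline{\det B}=0$, contradicting $M\in{\rm GL}(2,\C)$. Symmetrically, $B=0$ forces $\det N=\det\overline{A}=0$, again a contradiction. Hence ${\rm rank}(A)={\rm rank}(B)=1$. It remains to show $\det\bL\neq0$. Expanding the determinant with the rank-one parametrisation, exactly as in the proof of Lemma \ref{lem:detL}, gives
$$
\det\bL=\det(A+\overline{B})\,\det(\overline{A}-B)=-|\det M|^2\neq 0.
$$
That computation used only the rank-one form of $A$ and $B$, not invertibility, so it applies here. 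Thus $L$ is an SPCR automorphism, and $\Phi$ is a bijection between $\widetilde\calM$ and $F^{-1}(0,0)$.

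The main obstacle is matching the normalisation of $\calM$, namely $\det\bL=-1$, or equivalently the quotient by $\sim$. Because $\bL$ must keep the block form $\left(\begin{smallmatrix}A&B\\ \overline{B}&\overline{A}\end{smallmatrix}\right)$, the scalar $a$ in $\bL=a\bL'$ must be real. Under $\Phi$ this scaling becomes $(M,N)\mapsto(aM,aN)$, and the zero set $F^{-1}(0,0)$ is invariant under it. So I would read the identification with $\calM$ in one of two equivalent ways. Either $\calM$ is identified with the slice $\{|\det M|=1\}$ of $\calM^*$; this slice meets each orbit in exactly the two points $\pm$, since $\det(aM)=a^2\det M$. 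Or $\calM$ is identified with the space of real-ray orbits in $\calM^*$. Either way, the identification follows from the bijection $\Phi$ together with the identity $\det\bL=-|\det M|^2$. I would state this normalisation explicitly, because it is the point where the proposition as written is somewhat loose.
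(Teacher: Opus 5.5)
Your substitution $(M,N)=(A+\overline{B},\,\overline{A}-B)$ is the same one the paper uses, and the surrounding algebra is correct: $M+\overline{N}=2A$, $M-\overline{N}=2\overline{B}$, $F(M,N)=(4\det A,\,4\overline{\det B})$, and the exclusion of rank $0$.

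\textbf{The gap.} In both directions you rely on the identity $\det\bL=\det(A+\overline{B})\det(\overline{A}-B)=-|\det M|^2$ for rank-one $A,B$, and this identity is false. You say the computation in the proof of Lemma~\ref{lem:detL} ``used only the rank-one form.'' It also used the normalisation that $\lambda_1,\lambda_2,\mu_1,\mu_2$ are real, and that normalisation is not available. The ratio $\lambda_2/\lambda_1$ is determined by $A$, since it fixes the line $\mathrm{im}\,A$; likewise $\mu_2/\mu_1$ is determined by $B$.

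In that proof's notation, $\bW=\lambda\,(\zeta_1z_1+\zeta_2z_2)+\mu\,\overline{(\overline{\zeta_3}z_1+\overline{\zeta_4}z_2)}$. Without the reality assumption one finds
\[
\det\bL=-|\lambda_1\mu_2-\lambda_2\mu_1|^2\,|\zeta_1\overline{\zeta_4}-\zeta_2\overline{\zeta_3}|^2,\qquad
\det(A+\overline{B})=(\lambda_1\overline{\mu_2}-\lambda_2\overline{\mu_1})\,(\zeta_1\overline{\zeta_4}-\zeta_2\overline{\zeta_3}),
\]
and these moduli differ in general. Formula (\ref{eq:detLf}) of the alternative proof fails on the same examples.

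\textbf{Counterexamples.} Take $(\zeta_1,\zeta_2,\zeta_3,\zeta_4)=(1,0,0,1)$ and $\lambda=(1,i)$.

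With $\mu=(1,-i)$ you get $w_1=z_1+\overline{z_2}$, $w_2=i(z_1-\overline{z_2})$. This is an SPCR automorphism: both blocks have rank one and $\det\bL=-4$. But $M=\left(\begin{smallmatrix}1&1\\ i&i\end{smallmatrix}\right)$ is singular, so your forward step (getting $\det M\neq 0$ from Lemma~\ref{lem:detL}) fails.

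With $\mu=(1,i)$ you get $w_1=z_1+\overline{z_2}$, $w_2=iw_1$. Here $M=\left(\begin{smallmatrix}1&1\\ i&-i\end{smallmatrix}\right)$ and $N=\left(\begin{smallmatrix}1&-1\\ -i&-i\end{smallmatrix}\right)$ are invertible and $F(M,N)=(0,0)$, yet $\det\bL=0$. So your converse step fails.

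\textbf{Consequence.} The defect lies in the statement itself. Under $\Phi$, neither inclusion holds, with or without the normalisation $\det\bL=-1$. The paper's proof uses the same substitution and simply asserts $M,N\in{\rm GL}(2,\C)$ and $L\in\calM$. It therefore rests on the same faulty Lemma~\ref{lem:detL}; you inherited the error rather than introducing it.

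What your argument does establish is that $\Phi$ maps $\{\det A=\det B=0\}$ bijectively onto $\{F=0\}$, with $F$ defined on all pairs of matrices. On that set:
\begin{itemize}
\item $\bL$ is invertible iff $\mathrm{im}\,A\neq\mathrm{im}\,B$ and $\ker A\neq\ker\overline{B}$;
\item $M\in{\rm GL}(2,\C)$ iff $\mathrm{im}\,A\neq\mathrm{im}\,\overline{B}$ and $\ker A\neq\ker\overline{B}$.
\end{itemize}
These are different open conditions, so requiring $M,N\in{\rm GL}(2,\C)$ cannot replace requiring $\det\bL\neq0$. Your identification of $\calM$ with the slice $|\det M|=1$ rests on the same false identity. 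It has to be replaced by the condition $\det\bL=-1$ itself.
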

\begin{proof}
If $L\in\calM$ with matrix $\bL$, then the matrices
$$
M=A+\overline{B},\quad N=\overline{A}-B
$$
are both in ${\rm GL}(2,\C)$. Since
$$
A=\frac{1}{2}(M+\overline{N}),\quad B=\frac{1}{2}(\overline{M}-N),
$$
by the rank condition we have
$$
\det(M+\overline{N})=0\quad\text{and}\quad \det(M-\overline{N})=0.
$$
Conversely, if $(M,N)\in{\rm GL}(2,\C)^2$ then set
$$
A=\frac{1}{2}(M+\overline{N}),\quad B=\frac{1}{2}(\overline{M}-N).
$$
(Observe that we can have neither $M=\overline{N}$ nor $M=-\overline{N}$). Then the linear transformation $L\in\fL_\R$ with matrix 
$$
{\bf L}=\left[\begin{matrix}
               A&B\\
               \overline{B}&\overline{A}
              \end{matrix}\right]
$$
is in $\calM$. We have thus proved the proposition.
\end{proof}
If
$$
M=\left(\begin{matrix}
         m_1&m_2\\
         m_3&m_4
        \end{matrix}\right),\quad
 N=\left(\begin{matrix}
         n_1&n_2\\
         n_3&n_4
        \end{matrix}\right).       
$$
this reads
\begin{eqnarray*}
 &&
 m_1m_4-m_2m_3+\overline{n_1}\overline{n_4}-\overline{n_2}\overline{n_3}-m_1\overline{n_4}-\overline{n_1}m_4+m_2\overline{n_3}+\overline{n_2}m_3=0,\\
 &&
 m_1m_4-m_2m_3+\overline{n_1}\overline{n_4}-\overline{n_2}\overline{n_3}+m_1\overline{n_4}+\overline{n_1}m_4-m_2\overline{n_3}-\overline{n_2}m_3=0.
\end{eqnarray*}
By adding and subtracting the previous equations we have
\begin{eqnarray*}
 &&
m_1m_4-m_2m_3+\overline{n_1}\overline{n_4}-\overline{n_2}\overline{n_3}=0,\\
 &&
m_1\overline{n_4}+\overline{n_1}m_4-m_2\overline{n_3}-\overline{n_2}m_3=0.
\end{eqnarray*}
Let $(\bz,\bw)=(z_1,\dots,z_4,w_1,\dots,w_4)$ be complex coordinates in ${\rm GL}(2,\C)^2$: $z_1z_4-z_2z_3\neq 0$ and $w_1w_4-w_2w_3\neq 0$. We conclude that we have assigned $L$ to a point of ${\rm GL}(2,\C)^2$ satisfying
\begin{eqnarray}\label{eq:SPCR1}
 &&
 F_1(\bz,\bw)=z_1z_4-z_2z_3+\overline{w_1}\overline{w_4}-\overline{w_2}\overline{w_3}=0,\\
 &&\label{eq:SPCR2}
 F_2(\bz, \bw)=z_1\overline{w_4}+\overline{w_1}z_4-z_2\overline{w_3}-\overline{w_2}z_3=0.
\end{eqnarray}
Conversely, if $(M,N)\in{\rm GL}(2,\C)^2$ with coordinates  $(\bm,\bn)=(m_1,\dots,m_4,n_1,\dots,n_4)$ such that $F_1(\bm,\bn)=F_2(\bm,\bn)$ then set
$$
A=\frac{1}{2}(M+\overline{N}),\quad B=\frac{1}{2}(\overline{M}-N).
$$
(Observe that we can have neither $M=\overline{N}$ nor $M=-\overline{N}$). Then the linear transformation $L\in\fL_\R$ with matrix 
$$
{\bf L}=\left[\begin{matrix}
               A&B\\
               \overline{B}&\overline{A}
              \end{matrix}\right]
$$
is in $\calM$. We have thus proved the proposition.

\begin{prop}
The set $\calM$ of SPCR linear automorphisms of $\R^4$ 
is a 12-dimensional submanifold of ${\rm GL}(2,\C)^2$. 

\end{prop}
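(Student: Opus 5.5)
Since ${\rm GL}(2,\C)^2$ is an open subset of $\C^8\cong\R^{16}$, the plan is to realise $\calM$ (via Proposition \ref{prop:SPCReqs}, i.e.\ as $\calM^*$) as the zero set of the real map
$$
\Phi=(\Re F_1,\Im F_1,\Re F_2,\Im F_2):{\rm GL}(2,\C)^2\to\R^4,
$$
with $F_1,F_2$ given by (\ref{eq:SPCR1}) and (\ref{eq:SPCR2}). By the regular value theorem, $\calM$ is then a submanifold of real dimension $16-4=12$ as soon as $d\Phi$ has real rank $4$ at every point of $\Phi^{-1}(0)$.

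Rank $4$ of $d\Phi$ is equivalent to the complex $2\times 16$ matrix of derivatives of $(F_1,F_2)$ with respect to $(\bz,\overline{\bz},\bw,\overline{\bw})$, stacked with its conjugate rows (those of $\overline{F_1},\overline{F_2}$), having complex rank $4$. Since $F_1$ is holomorphic in $\bz$ and antiholomorphic in $\bw$, the $\bz$-derivatives of $F_1$ are $(z_4,-z_3,-z_2,z_1)$, i.e.\ the entries of ${\rm adj}(M)$ up to ordering, and its $\overline{\bw}$-derivatives are the entries of ${\rm adj}(\overline N)$. Likewise $F_2$ has $\bz$-derivatives $(\overline{w_4},-\overline{w_3},-\overline{w_2},\overline{w_1})$ and $\overline{\bw}$-derivatives $(z_4,-z_3,-z_2,z_1)$. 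The key observation is that $\overline{F_1},\overline{F_2}$ have no $\partial/\partial\bz$ component, while $F_1,F_2$ do. So I will look at the $4\times 8$ block consisting of the $\partial_{\bz}$ and $\partial_{\overline{\bw}}$ columns:
$$
\begin{pmatrix} {\rm adj}(M)^\flat & {\rm adj}(\overline N)^\flat\\ {\rm adj}(\overline N)^\flat & {\rm adj}(M)^\flat\\ 0&0\\0&0\end{pmatrix}.
$$
Here $^\flat$ denotes the flattened row vector. The last two rows are zero because $\overline{F_i}$ depends on $\overline{\bz}$ and $\bw$ only. Symmetrically, the $\partial_{\overline{\bz}},\partial_{\bw}$ columns carry the conjugate rows. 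The full matrix therefore decomposes into two $2\times 8$ blocks, and the rank is $4$ iff the rows $u=({\rm adj}M^\flat,{\rm adj}\overline N^\flat)$ and $v=({\rm adj}\overline N^\flat,{\rm adj}M^\flat)$ are linearly independent.

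The main obstacle is excluding dependence $v=cu$. That would force ${\rm adj}\,\overline N=c\,{\rm adj}\,M$ and ${\rm adj}\,M=c\,{\rm adj}\,\overline N$, hence $c^2=1$ (since $M\neq0$) and so $\overline N=\pm M$. This contradicts the defining equations: with $\overline N=\pm M$, $F_1=2\det M\neq0$ (using $\det\overline N=\det M$), or equivalently $\det(M\mp\overline N)=\det 0$ is fine but $\det(M\pm\overline N)=4\det M\neq0$. This is exactly the paper's remark that neither $M=\overline N$ nor $M=-\overline N$ is possible. Invertibility of $M$ guarantees $u\neq0$. Hence $(0,0)$ is a regular value on ${\rm GL}(2,\C)^2$, and $\calM^*$, thus $\calM$, is a smooth real $12$-dimensional submanifold.

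Finally, I will check that $\calM^*$ is nonempty by exhibiting a point. Take $A$, $B$ of rank one with $\det\bL\neq0$, e.g.\ $A=E_{11}$, $B=E_{22}$; Lemma \ref{lem:detL} then gives $\det\bL<0$. This shows the submanifold is nonempty.
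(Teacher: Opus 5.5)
Your proposal is correct and follows the same route as the paper: the regular value theorem applied to the real and imaginary parts of $F_1,F_2$, with the Jacobian reducing to the two complex rows $u,v$, which are the rows of the $2\times 8$ matrix the paper displays. The paper only asserts the required rank, whereas you actually prove it (a relation $v=cu$ forces $\overline{N}=\pm M$, and then $F_1=2\det M\neq 0$). You also address nonemptiness, which follows directly from $(M,N)=(I,\mathrm{diag}(1,-1))\in F^{-1}(0,0)$ without appealing to the determinant lemma.
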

\begin{proof}
Consider the  defining equations of $\calM$:
\begin{eqnarray}
 &&\label{eq:G1}
 G_1(\bz,\bw)=2\Re(z_1z_4-z_2z_3)+2\Re(w_1w_4-w_2w_3)=0,\\
 &&\label{eq:G2}
 G_2(\bz,\bw)=2\Im(z_1z_4-z_2z_3)-2\Im(w_1w_4-w_2w_3)=0,\\
 &&\label{eq:G3}
 G_3(\bz,\bw)=2\Re(z_1\overline{w_4}+\overline{w_1}z_4)-2\Re(z_2\overline{w_3}-\overline{w_2}z_3)=0,\\
 &&\label{eq:G4}
 G_4(\bz,\bw)=2\Im(z_1\overline{w_4}+\overline{w_1}z_4)-2\Im(z_2\overline{w_3}-\overline{w_2}z_3)=0,
\end{eqnarray}

We calculate the partial derivatives of $G_1$, $G_2$, $G_3$ and $G_4$; we start from the holomorphic derivatives:
\begin{eqnarray*}
&&
  \frac{\partial G_1}{\partial z_1}=z_4,\quad  \frac{\partial G_1}{\partial z_2}=-z_3,\quad  \frac{\partial G_1}{\partial z_3}=-z_2,\quad  \frac{\partial G_1}{\partial z_4}=z_1,\\
  &&
  \frac{\partial G_1}{\partial w_1}=w_4,\quad  \frac{\partial G_1}{\partial w_2}=-w_3,\quad  \frac{\partial G_1}{\partial w_3}=-w_2,\quad  \frac{\partial G_1}{\partial w_4}=w_1,\\
&&
  \frac{\partial G_2}{\partial z_1}=-iz_4,\quad  \frac{\partial G_2}{\partial z_2}=iz_3,\quad  \frac{\partial G_2}{\partial z_3}=iz_2,\quad  \frac{\partial G_2}{\partial z_4}=-iz_1,\\
  &&
  \frac{\partial G_2}{\partial w_1}=iw_4,\quad  \frac{\partial G_2}{\partial w_2}=-iw_3,\quad  \frac{\partial G_2}{\partial w_3}=-w_2,\quad  \frac{\partial G_2}{\partial w_4}=iw_1,\\
  &&
  \frac{\partial G_3}{\partial z_1}=\overline{w_4},\quad  \frac{\partial G_3}{\partial z_2}=-\overline{w_3},\quad  \frac{\partial G_3}{\partial z_3}=\overline{w_2},\quad  \frac{\partial G_3}{\partial z_4}=\overline{w_1},\\
  &&
  \frac{\partial G_3}{\partial w_1}=\overline{z_4},\quad  \frac{\partial G_3}{\partial w_2}=\overline{z_3},\quad  \frac{\partial G_3}{\partial w_3}=-\overline{z_2},\quad  \frac{\partial G_3}{\partial w_4}=\overline{z_1},\\
   &&
  \frac{\partial G_4}{\partial z_1}=-i\overline{w_4},\quad  \frac{\partial G_4}{\partial z_2}=i\overline{w_3},\quad  \frac{\partial G_4}{\partial z_3}=-i\overline{w_2},\quad  \frac{\partial G_4}{\partial z_4}=-i\overline{w_1},\\
  &&
  \frac{\partial G_4}{\partial w_1}=i\overline{z_4},\quad  \frac{\partial G_4}{\partial w_2}=i\overline{z_3},\quad  \frac{\partial G_4}{\partial w_3}=-i\overline{z_2},\quad  \frac{\partial G_4}{\partial w_4}=i\overline{z_1}.
\end{eqnarray*}
We write $z_i=x_i+iy_i$ and $w_i=u_i+iv_i$, $i=1,\dots,4$. We only have to show that the matrix
$$
D(G_1,G_2,G_3,G_4)=\frac{\partial(G_1,G_2,G_3,G_4)}{\partial(x_i,y_i,u_i,v_i)}=\left(\begin{matrix}
                  m_4&-m_3&-m_2&m_1&n_4&-n_3&-n_2&n_1\\
                  \\
                 n_4&-n_3&-n_2&n_1&m_4&-m_3&-m_2&m_1
                 \end{matrix}\right),
$$
has constant rank 4.
\end{proof}

\begin{prop}\label{prop:CR2}
Let $\calN = \mathrm{GL}(2,\C)^2$. The set $\calM$ is a codimension 2 $\rm{CR}$ submanifold of $\calN$. 
\end{prop}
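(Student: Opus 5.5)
The plan is to use the standard criterion: a real submanifold $\calM$ of the complex manifold $\calN$ is CR exactly when the holomorphic tangent space
$$H_p\calM=T_p\calM\cap J\,T_p\calM$$
has constant complex dimension along $\calM$. I will work with the complex defining functions $F_1,F_2$ of (\ref{eq:SPCR1})–(\ref{eq:SPCR2}) rather than the real ones $G_1,\dots,G_4$. The space $H_p\calM$, viewed inside $T^{(1,0)}_p\calN\cong\C^8$, is the common kernel of the four $(1,0)$-forms
$$\partial F_1,\quad \partial F_2,\quad \partial\overline{F_1},\quad \partial\overline{F_2}$$
at $p$. So everything reduces to computing the rank of these four covectors and showing it is constant on $\calM$.

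\textbf{Step 1: compute the forms.} Because $F_1$ is holomorphic in $\bz$ and antiholomorphic in $\bw$, and $F_2$ is holomorphic in $\bz$ and antiholomorphic in $\bw$ as well, the forms split cleanly between the two blocks of coordinates:
$$\partial F_1=z_4dz_1-z_3dz_2-z_2dz_3+z_1dz_4,$$
$$\partial F_2=\overline{w_4}dz_1-\overline{w_3}dz_2-\overline{w_2}dz_3+\overline{w_1}dz_4,$$
$$\partial\overline{F_1}=w_4dw_1-w_3dw_2-w_2dw_3+w_1dw_4,$$
$$\partial\overline{F_2}=\overline{z_4}dw_1-\overline{z_3}dw_2-\overline{z_2}dw_3+\overline{z_1}dw_4.$$
The first two involve only $d\bz$ and the last two only $d\bw$. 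The rank is therefore the sum of the ranks of the two $\bz$-forms and the two $\bw$-forms.

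\textbf{Step 2: the rank is $4$ on $\calM$.} In each block the two coefficient vectors are the "adjugate patterns'' of $(z_1,\dots,z_4)$ and of $(\overline{w_1},\dots,\overline{w_4})$. Since $z$ and $w$ are invertible, each is nonzero. They are dependent exactly when $M=c\overline{N}$ for some $c\in\C^*$, and the $\bw$-block gives the same condition. I then rule this out on $\calM$, which is the main point. Substituting $M=c\overline N$ into $F_1=0$ gives $(c^2+1)\det\overline N=0$, so $c=\pm i$. Substituting into $F_2=0$, which is the polarization $\det(M+\overline N)-\det(M-\overline N)$ up to a factor, gives $2c\det\overline N=0$. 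This contradicts $\det N\neq0$. (This is the same observation as the earlier remark that $M=\pm\overline N$ cannot occur, now in the projective form $M\neq c\overline N$.) Hence the four forms have rank exactly $4$ at every point of $\calM$. As a byproduct this also confirms the constant rank $4$ claimed in the proof of the previous proposition, since the real differentials of $G_1,\dots,G_4$ are the real and imaginary parts of these forms.

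\textbf{Step 3: conclude.} It follows that $\dim_\C H_p\calM=8-4=4$ for all $p\in\calM$, so $\calM$ is a CR submanifold of CR dimension $4$. Since $\dim_\R T_p\calM=12=2\cdot4+4$, it is moreover generic, i.e. $T_p\calM+JT_p\calM=T_p\calN$. The remaining directions form a totally real part of real dimension $4$, equivalently two complex dimensions' worth of codimension, cut out by the two complex equations $F_1=F_2=0$. This is the sense in which I will read "codimension 2''.

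The step requiring care is the exclusion of $M\parallel\overline N$ on $\calM$. Everything else is bookkeeping, and the block splitting in Step 1 makes the rank computation transparent.
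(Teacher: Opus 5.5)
Your proof is correct, but it takes a different route from the paper.

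The paper works only in $z_1,\dots,z_4$. It takes the holomorphic derivatives of $G_1,G_2$, forms the $2\times2$ minors $(i,j)$, and writes down the Cramer-type $(1,0)$ fields $W_1,W_2$ annihilated by the corresponding $(1,0)$-forms. It then checks formal integrability by expressing $[W_1,W_2]$ as a combination of $W_1,W_2$. You instead use the constant-dimension criterion for $H_p\calM$ and compute the rank of $\partial F_1,\partial F_2,\partial\overline{F_1},\partial\overline{F_2}$ in all eight variables.

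Your route buys several things:
\begin{enumerate}
\item The block splitting gives $T^{1,0}_p\calM=\bigl(\ker\partial F_1\cap\ker\partial F_2\bigr)\oplus\bigl(\ker\partial\overline{F_1}\cap\ker\partial\overline{F_2}\bigr)$. This is a rank-$2$ piece in the $\bz$-directions plus a rank-$2$ piece in the $\bw$-directions, so a frame like the paper's accounts only for the $\bz$-half.
\item Your exclusion of $M=c\overline{N}$ is exactly the nondegeneracy that a minor-based frame silently requires, and the paper does not verify it. Indeed, $W_1,W_2$ are independent only where $(1,2)\neq 0$.
\item No bracket computation is needed, since $T^{1,0}\calN\cap\C T\calM$ is automatically involutive once it has constant rank.
\item You obtain the precise type: generic, CR dimension $4$, real codimension $4$.
\item You also obtain the rank-$4$ statement left unproved in the preceding proposition.
\end{enumerate}

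For that last deduction, note that the $(1,0)$-parts of $\Re F_j,\Im F_j$ are an invertible combination of your four forms. So a real relation among $d\Re F_1,d\Im F_1,d\Re F_2,d\Im F_2$ would force a complex relation among those forms. This regular-level-set statement should logically come before the kernel description in your Step 1.

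What the paper's approach offers in return is an explicit local frame of the CR bundle, once supplemented by the analogous fields in the $\bw$-variables. Such a frame is convenient for later computations such as the Levi form.

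Your reading of ``codimension $2$'' as the number of complex defining equations is the only one compatible with $\dim_\R\calM=12$. Stating it explicitly is right, since the CR codimension in the usual sense is $4$.
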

\begin{proof}
The holomorphic partial derivatives of $G_1$ and $G_2$ are:
\begin{eqnarray*}
 &&
 \frac{\partial G_1}{\partial z_1}=\frac{\overline{z_4}}{2},\quad
\frac{\partial G_1}{\partial z_2}=-\frac{\overline{z_3}}{2},\quad
\frac{\partial G_1}{\partial z_3}=-\frac{\overline{z_2}}{2},\quad
\frac{\partial G_1}{\partial z_4}=\frac{\overline{z_1}}{2},\\
&&
\frac{\partial G_2}{\partial z_1}=\frac{z_4}{2i},\quad
\frac{\partial G_2}{\partial z_2}=-\frac{z_3}{2i},\quad
\frac{\partial G_2}{\partial z_3}=-\frac{z_2}{2i},\quad
\frac{\partial G_2}{\partial z_4}=\frac{z_1}{2i}.
\end{eqnarray*}
The minors are:
\begin{eqnarray*}
 &&
 (1,2)=\frac{1}{2}\Im(\overline{z_3}z_4),\quad
(1,3)=\frac{1}{2}\Im(\overline{z_2}z_4),\quad
(1,4)=\frac{1}{2}\Im(\overline{z_4}z_1),\\
&&
(2,3)=\frac{1}{2}\Im(\overline{z_3}z_2),\quad
(2,4)=\frac{1}{2}\Im(\overline{z_1}z_3),\\
&&
(3,4)=\frac{1}{2}\Im(\overline{z_1}z_2).
 \end{eqnarray*}
 We consider the (1,0) forms 
$$
\phi_i=\sum_{j=1}^2\frac{\partial G_i}{\partial z_j}dz_i,\quad i=1,2.
$$
A (1,0) vector field $W=\sum_{i=1}^4 w_i(\partial/\partial z_i)$ in the CR structure must satisfy
\begin{equation}\label{eq:par}
\sum_{i=1}^4w_i\frac{\partial G_j}{\partial z_i}=0,\quad j=1,2.
\end{equation}
A simple linear algebra argument shows that $W$ is generated by the (1,0) vector fields given by
\begin{eqnarray}\label{eq:W1}
 &&
 W_1=(2,3)\frac{\partial}{\partial z_1}+(3,1)\frac{\partial}{\partial z_2}+(1,2)\frac{\partial}{\partial z_3},\\
 &&\label{eq:W2}
 W_2=(2,4)\frac{\partial}{\partial z_1}+(4,1)\frac{\partial}{\partial z_2}+(1,2)\frac{\partial}{\partial z_4}.
\end{eqnarray}
Finally, we demonstrate formal integrability:
\begin{eqnarray*}
 [W_1,W_2]&=&\left((2,3)\frac{\partial G_1}{\partial z_2}+(4,1)\frac{\partial G_1}{\partial z_2}+(1,2)\frac{\partial G_1}{\partial z_4}\right)\frac{\partial}{\partial z_1}\\
 &&-\left((2,3)\frac{\partial G_1}{\partial z_1}+(1,2)\frac{\partial G_1}{\partial z_3}+(4,1)\frac{\partial G_1}{\partial z_1}\right)\frac{\partial}{\partial z_2}\\
 &&+(1,2)\frac{\partial G_1}{\partial z_2}\frac{\partial}{\partial z_3}-(1,2)\frac{\partial G_1}{\partial z_1}\frac{\partial}{\partial z_4}.
\end{eqnarray*}
From Equations (\ref{eq:par}) it follows that
\begin{eqnarray*}
 &&
 (2,3)\frac{\partial G_1}{\partial z_2}+(4,1)\frac{\partial G_1}{\partial z_2}+(1,2)\frac{\partial G_1}{\partial z_4}=
 (2,3)\frac{\partial G_1}{\partial z_2}-(2,4)\frac{\partial G_1}{\partial z_1},\\
 &&
 (2,3)\frac{\partial G_1}{\partial z_1}+(1,2)\frac{\partial G_1}{\partial z_3}+(4,1)\frac{\partial G_1}{\partial z_1}=
(4,1)\frac{\partial G_1}{\partial z_1}-(3,1)\frac{\partial G_1}{\partial z_2}, 
\end{eqnarray*}
from where we have
$$
[W_1,W_2]=\frac{\partial G_1}{\partial z_2}W_1-\frac{\partial G_1}{\partial z_1}W_2.
$$
\end{proof}

\bibliographystyle{amsplain}

\begin{thebibliography}{10}

\bibitem{Boggess}
A.~Boggess, \emph{CR Manifolds and the Tangential Cauchy-Riemann Complex}, Studies in Advanced Mathematics, CRC Press, Boca Raton, FL, 1991.

\bibitem{DragomirTomassini}
S.~Dragomir and L.~Tomassini, \emph{Differential Geometry and Analysis on CR Manifolds}, Progress in Mathematics, vol. 246, Birkh\"auser Boston, Inc., Boston, MA, 2006.

\bibitem{Webb}
S.~M. Webster, \emph{On the mapping problem for algebraic real hypersurfaces}, Invent. Math. \textbf{43} (1977), no. 1, 53--68.

\end{thebibliography}

\end{document}